\newtheorem{proposition}{Proposition}[section]
\newtheorem{theorem}[proposition]{Theorem}
\newtheorem{lemma}[proposition]{Lemma}
\newtheorem{conjecture}[proposition]{Conjecture}
\newtheorem{definition}{Definition}[section]
\numberwithin{equation}{section}
\title[$ L ^{1}$ Dichotomy for the Discrepancy Function]{Dichotomy Results for the $ L ^{1}$ Norm of the Discrepancy Function} 
\author{Gagik Amirkhanyan} 
\address{School of Mathematics, Georgia Institute of Technology, Atlanta GA 30332, USA}
\email{gagik@math.gatech.edu}
\author{Dmitriy Bilyk}
\address{School of Mathematics, University of Minnesota, Minneapolis MN 55455, USA} \email{dbilyk@math.umn.edu}
\thanks{} 
\author{Michael T  Lacey}   %  can use \and  
\address{School of Mathematics, Georgia Institute of Technology, Atlanta GA 30332, USA}
\email {lacey@math.gatech.edu}
\thanks{}
\subjclass[2000]{11K38, 42B35, 42C40}
\keywords{Discrepancy function; irregularities of distribution; function spaces.}
\begin{document}

\maketitle

\begin{abstract} It is a well-known conjecture in the theory of irregularities of distribution that the $L^1$ norm of the discrepancy function of an $N$-point set  satisfies the same asymptotic  lower bounds as its $L^2$ norm. In dimension $d=2$ this fact has been established by Hal\'{a}sz, while in higher dimensions the problem is wide open. In this note, we establish a series of dichotomy-type results which state that if the $L^1$ norm of the discrepancy function is too small (smaller than the conjectural bound), then the discrepancy function has to be large in some other function space.

\end{abstract}

%%%%%%%%%%%%%%%%%%%%%%%%%%%%%% SECTION  SECTION SECTION
%%%%%%%%%%%%%%%%%%%%%%%%%%%%%% SECTION  SECTION SECTION 
\section{Introduction} %\label{s:}

\subsection{Preliminaries}

For integers $ d\ge 2$, and $ N\ge 1$, let $ \mathcal P_N  \subset  [0,1] ^{d} $ be a finite point set  with cardinality $  \sharp  \mathcal P_N=N $. Define the associated discrepancy function by 
\begin{equation}  \label{e:discrep}
D_N(x)=\sharp( \mathcal P_N \cap [ 0, x))-N \lvert  [ 0, x) \rvert,
\end{equation}
where  $ x= (x_1 ,\dotsc, x_d)$ and $[ 0, x)=\prod _{j=1}^d [0,x_j)$ is a rectangle with antipodal corners at  $ 0$ and $ x$, and $|\cdot |$ stands for the $d$-dimensional Lebesgue measure.
The dependence upon the selection of points $\mathcal P_N$ will be suppressed, as we are interested  in bounds 
that are  only a function of $N= \sharp \mathcal P_N$.  The discrepancy function $D_N$ measures equidistribution of $\mathcal P_N$: a  set of points is \emph{well-distributed} if  $ D_N$ is small in some appropriate function space.

It is a basic fact of the theory of irregularities of distribution that relevant norms of this function  in dimensions $2$ and higher must tend to infinity as $N$ grows. 
The classic results are due to   Roth \cite{MR0066435} in the case of the $ L ^{2}$ norm
and  Schmidt \cite{MR0491574} for   $L ^{p} $, $ 1< p < 2$:
%%%%%%%%%%%%%%%%%%%%%%%%%%%%%% THEOREM THEOREM THEOREM
\begin{theorem}\label{t.DP} 
For $1<p<\infty$ and any collection of points $ \mathcal P_N\subset [0,1] ^{d}$, we have% the 
%estimates 
\begin{equation}\label{e:DP}
\lVert  D_N \rVert _{ p} \gtrsim (\log N) ^{(d-1)/2}\,. 
\end{equation}
Moreover, we have the endpoint estimate
\begin{equation}\label{e:DPend}
\lVert  D_N \rVert _{ L (\log L )  ^{(d-2)/2}} 
\gtrsim (\log N) ^{(d-1)/2}.
\end{equation}
\end{theorem}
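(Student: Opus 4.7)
The plan is to carry out the classical Roth orthogonal-function method and then pass to dual spaces. Choose an integer $n$ with $2^{n}\simeq 2N$, so that among the $2^{n}$ dyadic rectangles of any fixed shape and volume $2^{-n}$ at most half contain a point of $\mathcal P_N$. For each multi-index $\vec r=(r_1,\dots,r_d)\in\mathbb N^d$ with $|\vec r|:=r_1+\cdots+r_d=n$, I would partition $[0,1)^d$ into the dyadic rectangles $R$ of side-lengths $2^{-r_j}$ in the $j$-th direction, and attach to each such $R$ a Haar-like function $h_R$ supported on $R$ and equal to $\pm 1$ on the two halves of $R$ split in some fixed coordinate direction. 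A short computation on empty rectangles shows that, for a suitable choice of signs $\varepsilon_R\in\{\pm 1\}$, the resulting $r$-function
\[
f_{\vec r}\;=\;\sum_{R}\varepsilon_R h_R,\qquad \|f_{\vec r}\|_\infty\leq 1,
\]
satisfies $\langle D_N,f_{\vec r}\rangle\gtrsim 1$. The test function is then $F=\sum_{|\vec r|=n}f_{\vec r}$, and since $\#\{\vec r:|\vec r|=n\}=\binom{n+d-1}{d-1}\simeq(\log N)^{d-1}$, one obtains $\langle D_N,F\rangle\gtrsim(\log N)^{d-1}$.

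For the $L^{2}$ bound (and hence for $p\ge 2$ by monotonicity), the distinct $f_{\vec r}$ are mutually orthogonal — products of Haar functions on dyadic rectangles of different shapes have vanishing integral — so $\|F\|_{2}\lesssim(\log N)^{(d-1)/2}$, and duality yields $\|D_N\|_{2}\gtrsim(\log N)^{(d-1)/2}$. For $1<p<2$ the plan is to invoke a Chang–Wilson–Wolff exponential square-function estimate: products of finitely many distinct $f_{\vec r}$ are again bounded mean-zero $r$-functions, so the system behaves like a quasi-independent Haar martingale, and the pointwise bound $S(F)^{2}\le\#\{\vec r:|\vec r|=n\}$ on the dyadic square function of $F$ forces
\[
\|F\|_{\exp L^{2}}\;\lesssim\;(\log N)^{(d-1)/2}.
\]
This controls $\|F\|_{p'}$ for every $p'<\infty$, and duality gives \eqref{e:DP}. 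For the endpoint in dimension $d\ge 3$, the Orlicz dual of $L(\log L)^{(d-2)/2}$ is (up to constants) $\exp L^{2/(d-2)}$, and the embedding $\exp L^{2}\hookrightarrow\exp L^{2/(d-2)}$ on $[0,1]^d$ is automatic once $d\ge 3$, so the same $F$ delivers \eqref{e:DPend}.

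The principal obstacle is the $d=2$ case of the endpoint, where \eqref{e:DPend} reduces to Halász's $L^{1}$ bound. The linear test function $F$ above cannot be used directly because its $L^{\infty}$ norm grows like $\sqrt{\log N}$. Halász's device is to replace $F$ by a multiplicative Riesz product of the form $\prod_{|\vec r|=n}\bigl(1+\delta f_{\vec r}\bigr)-1$ for a small absolute constant $\delta>0$; the product structure of the $r$-functions keeps the $L^{\infty}$ norm uniformly bounded, while a cancellation argument recovers a lower bound of order $\log N$ on the pairing with $D_N$. Making these two properties coexist — uniform $L^{\infty}$ control on the full product together with a sharp lower bound on the pairing — is the genuinely hard step, and is what restricts the $L^{1}$ endpoint to $d=2$.
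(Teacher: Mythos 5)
Your treatment of the $L^p$ range $1<p<\infty$ is essentially the paper's own sketch of the Roth--Schmidt argument (Section~2): build the $r$-functions $f_{\vec r}$ using Lemma~\ref{l.r}, sum over $\lvert \vec r\rvert=n$, use $L^{p'}$ boundedness of the test function (via Littlewood--Paley or Schmidt's combinatorics), and dualize. That part is fine, though the appeal to Chang--Wilson--Wolff is unnecessary; the plain product Littlewood--Paley inequality $\lVert F\rVert_{p'}\lesssim_{p'} \lVert S(F)\rVert_{p'}$ with $S(F)\le n^{(d-1)/2}$ already gives \eqref{e:DP}.

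The genuine gap is the Orlicz endpoint \eqref{e:DPend} for $d\ge 3$. The claimed subgaussian estimate $\lVert F\rVert_{\exp L^2}\lesssim n^{(d-1)/2}$ (equivalently $\lVert Z\rVert_{\exp L^2}\lesssim 1$, where $Z$ is the normalization used in the paper) is \emph{not} a consequence of Chang--Wilson--Wolff in the multiparameter setting, and is in fact not known. Two concrete problems. First, CWW controls the exponential-square norm by $\lVert S(\cdot)\rVert_\infty$ only for one-parameter dyadic martingales; for the $d$-fold product Haar system the analogous statement fails, and what \emph{is} known is precisely the restricted distributional bound in Theorem~\ref{t:exp}, where the subgaussian decay $\lvert\{\lvert Z\rvert>t\}\rvert\lesssim e^{-ct^2}$ holds only for $t\lesssim n^{(1-2\epsilon)/(4d-2)}$, far short of the full range $t\lesssim n^{(d-1)/2}$ your argument requires. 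Second, the ``quasi-independent Haar martingale'' heuristic breaks exactly when $d\ge 3$: two distinct $\vec r,\vec s$ with $\lvert\vec r\rvert=\lvert\vec s\rvert=n$ can agree in some coordinate $r_j=s_j$, and then $f_{\vec r}f_{\vec s}$ is no longer an $r$-function — this ``coincidence'' obstruction is precisely the content of the Beck gain and the reason the small ball conjecture is open in $d\ge 3$. If the unrestricted $\exp L^2$ estimate were true as you assert, it would in fact yield $\lVert D_N\rVert_{L(\log L)^{1/2}}\gtrsim n^{(d-1)/2}$, which is strictly stronger than \eqref{e:DPend} for $d\ge 4$ and would be a new theorem. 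The actual proof of \eqref{e:DPend} for $d\ge 3$ in \cite{MR2419612} does not proceed by a linear test function plus duality at all; it builds a ``short'' Riesz product $\prod(1+cf_{\vec r})$ over a carefully chosen subfamily of $\vec r$'s, uses the Beck-gain combinatorics to keep the product under control, and extracts the lower bound from the product expansion. Your closing paragraph recognizes that $d=2$ needs the Riesz product, but the same device — in a harder, genuinely multiparameter form — is also what makes $d\ge 3$ work; it does not fall out of an $\exp L^2$ embedding.
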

%%%%%%%%%%%%%%%%%%%%%%%%%%%%%% THEOREM THEOREM THEOREM
In dimension $ d=2$ the  $L^1$ endpoint estimate  above  was established by  Hal{\'a}sz \cite{MR637361}, while its  Orlicz space generalization  for dimensions $ d\ge 3$ is due to the last author   \cite{MR2419612} (notice that, when $d=2$, we have $L (\log L )  ^{(d-2)/2} = L^1$).

The symbol ``$\gtrsim$" in this paper stands for ``greater than a constant multiple of", and the implied constant may depend on the dimension, the function space, but {\it{not}} on the configuration $\mathcal P_N$ or the number of points $N$. $A\simeq B$ means $A\lesssim B \lesssim A$.

Estimate \eqref{e:DP} is sharp, i.e. there exist sets $ \mathcal P_N$ that meet the $ L ^{p}$  bounds \eqref{e:DP} in all dimensions.  This remarkable fact is established by beautiful and quite non-trivial constructions of point distributions $ \mathcal P_N$. We refer the reader to one of the very good references \cite{MR903025,MR2683394,MR1470456}  for 
more information about low-discrepancy sets, which is an important complement to the theme of this note.  

The subject of our paper is the $ L ^{1}$ endpoint. Hal\'{a}sz's original argument yields the following very weak extension to higher dimensions.
\begin{theorem}\label{t:DL1}
In all dimensions $d\ge 3$, we have \begin{equation}\label{e:DL1}
\| D_N \|_1 \gtrsim \log^{1/2} N. \end{equation}
\end{theorem}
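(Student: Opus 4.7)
The strategy is to extend Hal\'asz's original $d=2$ argument to $d\geq 3$ by using test functions that are genuinely two-dimensional --- supported on the first two coordinates and constant in the remaining $d-2$. Fix the coordinates $x_1,x_2$, set $r := \lfloor \log_2 N \rfloor$, and for each shape $k\in\{0,\ldots,r\}$ consider the $N$ dyadic rectangles $R = R_1\times R_2\subset [0,1]^2$ with $|R_1|=2^{-k}$, $|R_2|=2^{-(r-k)}$. Write $h_R(x_1,x_2)$ for the $L^\infty$-normalized 2D Haar function on $R$, viewed as a function on $[0,1]^d$ constant in $x_3,\ldots,x_d$. I build the Hal\'asz $r$-function
\[
f_k(x) := \sum_{R \text{ of shape }k} \epsilon_R^{(k)}\, h_R(x_1, x_2),
\]
with signs $\epsilon_R^{(k)}\in\{\pm1\}$ chosen as in Hal\'asz. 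Since rectangles of fixed shape partition $[0,1]^2$, automatically $\|f_k\|_\infty\leq 1$ and $\int f_k = 0$. The Roth-type lower bound $\langle D_N, f_k\rangle \gtrsim 1$ is obtained by integrating out $x_3,\ldots,x_d$: this converts $\langle D_N, f_k\rangle$ into a pairing of $f_k$ with the \emph{weighted} 2D discrepancy $\tilde D(x_1,x_2) = \sum_p (1-p_3)\cdots(1-p_d) \mathbf{1}_{(p_1,p_2) \leq (x_1,x_2)} - (N/2^{d-2})\,x_1 x_2$, for which Hal\'asz's 2D Roth-type argument applies directly.

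The heart of the proof is Hal\'asz's imaginary-valued Riesz product
\[
G(x) := \prod_{k=0}^{r} \bigl(1 + i \gamma f_k(x)\bigr), \qquad \gamma := c/\sqrt{r},
\]
together with its imaginary part $\Phi := \operatorname{Im} G$. Because $|1+i\gamma f_k|^2 = 1 + \gamma^2 f_k^2 \leq 1+\gamma^2$, I get the uniform bound $\|\Phi\|_\infty\leq\|G\|_\infty \leq (1+\gamma^2)^{(r+1)/2} \leq e^{c^2/2} = O(1)$. Expanding,
\[
\Phi = \gamma\sum_k f_k \;-\; \gamma^3\!\!\sum_{k_1<k_2<k_3} \!\!f_{k_1}f_{k_2}f_{k_3} \;+\;\cdots,
\]
so the linear contribution to $\langle D_N,\Phi\rangle$ is $\gamma\sum_k\langle D_N,f_k\rangle \gtrsim \gamma r \sim \sqrt r$. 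Higher-order terms are controlled by the \emph{2D product lemma}: for $k_1<\cdots<k_s$ the product $f_{k_1}\cdots f_{k_s}$ collapses to a $\pm 1$-weighted sum of 2D Haar functions on rectangles of area $2^{-r-(k_s-k_1)} < 1/N$, on which a direct estimate gives $|\langle D_N, f_{k_1}\cdots f_{k_s}\rangle|\lesssim 2^{-(k_s-k_1)}$. Summing geometrically, $\sum_{s\geq 2}\gamma^s\sum_{k_1<\cdots<k_s}|\langle D_N,\prod_j f_{k_j}\rangle| \lesssim \gamma^2 r = O(1)$, dominated by the $\sqrt r$ linear term. Therefore $\langle D_N,\Phi\rangle\gtrsim\sqrt r$, and
\[
\|D_N\|_1 \;\geq\; \frac{|\langle D_N,\Phi\rangle|}{\|\Phi\|_\infty} \;\gtrsim\; \sqrt{r} \;\gtrsim\; \sqrt{\log N}.
\]

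The main obstacle is precisely the 2D product lemma --- the collapse of a product of Haar functions on rectangles of fixed area but varying shape into a single Haar on a smaller rectangle --- which is a genuine two-dimensional phenomenon. In a true $d$-dimensional analogue, with shape parameter $k$ ranging over the $(d-1)$-dimensional simplex $\{k\in\mathbb Z_{\geq 0}^d:|k|=r\}$, such products expand into large sums whose control at the sharpness needed for the conjectured exponent $(d-1)/2$ remains open. The proposal above deliberately sacrifices $d-2$ of the coordinates (by making the test functions constant there) to bypass this difficulty, at the cost of recovering only the 2D exponent $1/2$, independent of $d$.
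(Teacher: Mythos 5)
The paper does not actually write out a proof of Theorem~\ref{t:DL1}; it merely attributes the bound to ``Hal\'asz's original argument.'' Your proposal reconstructs exactly that argument: 2D Haar test functions viewed as functions on $[0,1]^d$ constant in the last $d-2$ coordinates, the Riesz product $\prod(1+i\gamma f_k)$ with $\gamma\sim r^{-1/2}$, the 2D collapse lemma for products of $f_k$'s, and the domination of the higher-order terms by the linear one. This is the intended route, and your reduction to the weighted 2D discrepancy $\tilde D(x_1,x_2)=\sum_p w_p\mathbf 1_{(p_1,p_2)\le(x_1,x_2)}-(N/2^{d-2})x_1x_2$ with $w_p=\prod_{j\ge 3}(1-p_j)$ is the right way to justify $\langle D_N,f_k\rangle\gtrsim 1$: empty rectangles still yield $-(N/2^{d-2})|R|^2/16$, and summing over the $\gtrsim 2^r$ empty boxes gives a constant depending only on $d$. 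Your correct identification of the 2D product lemma as the bottleneck --- the reason this method saturates at the exponent $1/2$ rather than $(d-1)/2$ --- is exactly the obstruction the paper's dichotomy theorems are designed to circumvent.

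Two small points. First, $r=\lfloor\log_2 N\rfloor$ is slightly too small: you need $2^r\ge 2N$ (as in the paper's $n=\lceil 1+\log_2 N\rceil$) to guarantee that at least half of the $2^r$ shape-$k$ rectangles miss the projected point set, which is what makes the empty-box count $\gtrsim 2^r$. Second, the expansion of $\operatorname{Im}G$ involves only odd-order products, so your error sum really starts at $s=3$; your bound summing over all $s\ge 2$ is a harmless overestimate, and the conclusion $\lesssim\gamma^2 r=O(1)\ll\gamma r\sim\sqrt r$ stands, provided $c$ is chosen smaller than the implicit Roth constant so that the linear term strictly dominates even for bounded $r$.
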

%that is , and the following conjectural improvement of \eqref{e:DPend}. 
No improvements of \eqref{e:DL1} have been obtained thus far -- embarrassingly, it is not even known whether the $L^1$ norm of $D_N$ should grow as the dimension increases. It is widely believed that the correct bound for the $L^1$ norm matches Roth's $L^2$ estimates \eqref{e:DP}.

%%%%%%%%%%%%%%%%%%%%%%%%%%%%%% CONJECTURE CONJECTURE CONJECTURE
\begin{conjecture}\label{j:halasz} In all  dimensions $ d\ge 3$, the following estimate holds \begin{equation}\label{e:Conj}  \lVert D_N\rVert_{L ^{1} ([0,1]^{d}) } \gtrsim 
(\log N) ^{ (d-1)/2}.\end{equation}  
\end{conjecture}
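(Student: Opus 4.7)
The natural route is to attempt to extend Hal\'asz's method, which yields \eqref{e:DPend} in dimension two, to $d \ge 3$. By $L^1$--$L^\infty$ duality, establishing \eqref{e:Conj} reduces to producing, for every $N$-point set $\mathcal P_N$, a test function $\Psi$ with $\lVert\Psi\rVert_\infty \lesssim 1$ and $\langle D_N,\Psi\rangle \gtrsim (\log N)^{(d-1)/2}$. Following Hal\'asz, the candidate $\Psi$ would be a Riesz product
\[
\Psi \;=\; \prod_{\vec r} \bigl(1 + \varepsilon_{\vec r}\, f_{\vec r}\bigr) \;-\; 1,
\]
where $f_{\vec r} = \sum_{R} h_R$ is an $r$-function, namely a sum of $L^\infty$-normalized Haar functions $h_R$ over the dyadic rectangles $R\subset [0,1]^d$ of a fixed shape $\vec r = (r_1,\dots,r_d)$ with $\lvert\vec r\rvert_1 \simeq \log N$, the number of available shape vectors being of order $(\log N)^{d-1}$.

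The first step is a standard Roth-type expansion in the Haar basis: an appropriate choice of signs $\varepsilon_{\vec r}$ forces $\langle D_N, f_{\vec r}\rangle \gtrsim 1$ uniformly in $\vec r$, so that the linear part of the Riesz product contributes $\bigl\langle D_N, \sum_{\vec r}\varepsilon_{\vec r} f_{\vec r}\bigr\rangle \gtrsim (\log N)^{d-1}$. After normalizing by $(\log N)^{(d-1)/2}$ one arrives heuristically at the conjectured lower bound, provided the higher-order terms of $\Psi$ can be absorbed. Controlling them amounts to showing that the products $f_{\vec r_1}\cdots f_{\vec r_k}$ are, collectively, small — essentially a \emph{signed small ball inequality} for sums of $r$-functions on hyperbolic dyadic rectangles.

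This is where the main obstacle lies, and it is the very reason \eqref{e:Conj} is open. In dimension two, Hal\'asz exploits an algebraic identity relating $f_{\vec r_1} f_{\vec r_2}$ back to lower-frequency $r$-functions, which allows the Riesz product to telescope and its $L^\infty$ norm to be bounded by a constant. No such identity is known for $d \ge 3$; instead one is forced to rely on the signed small ball inequality $\bigl\lVert\sum_{\lvert \vec r\rvert=n}\varepsilon_{\vec r} f_{\vec r}\bigr\rVert_\infty \gtrsim n^{(d-1)/2 + \eta(d)}$, which, in the strong form needed to close this duality argument, is itself a central open problem.

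Given this bottleneck, I would not expect to prove \eqref{e:Conj} outright. The realistic target, and evidently the program of the paper, is a \emph{dichotomy}: either the higher-order terms in the Riesz product expansion are controllably small for the given $\mathcal P_N$ — in which case Hal\'asz's scheme produces the full $(\log N)^{(d-1)/2}$ bound — or they must be large, and this largeness is converted, via duality against a suitably chosen auxiliary test function, into a lower bound on $D_N$ in some companion function space (an Orlicz space, a mixed norm space, or a higher $L^p$ norm). Identifying the sharp companion space and making the dichotomy quantitative is where the real work goes, and where I would expect the principal difficulty to lie.
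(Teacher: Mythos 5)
You have correctly recognized that this statement is a \emph{conjecture}: the paper does not prove \eqref{e:Conj}, states explicitly that it is open for $d\ge 3$, and instead establishes the dichotomy Theorems \ref{t:simple}--\ref{t.example}. There is therefore no proof in the paper to compare yours against, and your ``proposal'' is rightly a diagnosis of the obstruction rather than an argument. That diagnosis is essentially accurate. One point worth sharpening: in dimension two the Riesz product $\prod_{\vec r}(1+\varepsilon_{\vec r}f_{\vec r})$ is automatically nonnegative and $L^\infty$-bounded (since $\lvert f_{\vec r}\rvert\le 1$), so boundedness is not the issue. What Hal\'asz really needs is that each higher-order product $f_{\vec r_1}\cdots f_{\vec r_k}$ is again, up to sign, an $r$-function, so that Lemma~\ref{l.r} bounds its pairing with $D_N$ by $N2^{-\lvert\vec r\rvert}$ and the tail of the expansion is summable. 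This product rule holds in $d=2$ because two distinct shape vectors with $\lvert\vec r_1\rvert=\lvert\vec r_2\rvert=n$ can never agree in a coordinate; for $d\ge 3$ coincidences $r_{1,j}=r_{2,j}$ occur, $h_I^2=\chi_I$ is no longer a Haar function, and the telescoping collapses. That combinatorial fact, rather than the signed small ball inequality per se, is the precise bottleneck --- though you are right that the two live in the same circle of ideas: the paper's dichotomies rely on the exponential distributional estimate of Theorem~\ref{t:exp}, which is a product of the small ball inequality program. In short, your reading of the conjecture's status and of the paper's strategy is correct, and the refinement above is the only substantive thing to add.
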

%%%%%%%%%%%%%%%%%%%%%%%%%%%%%% CONJECTURE CONJECTURE CONJECTURE
\noindent Observe that \eqref{e:DPend} supports this conjecture.\\

\subsection{Main results}

While the conjectural bound \eqref{e:Conj} does not seem accessible at this point, we shall  prove several dichotomy-type results for the $ L  ^{1}$ norm, which essentially say  that either the $ L ^{1}$ norm is large, or some 
larger norm has to be very large.  

We start with a very simple result, valid in all dimensions, which states that if a point distribution has optimally small (according to \eqref{e:DP}) $L^p$ norm of the discrepancy, then it has to satisfy the conjectured $L^1$ estimate \eqref{e:Conj}. In other words, if there exist  sets with $L^1$-discrepancy so small  as to violate Conjecture \ref{j:halasz}, they cannot simultaneously have low $L^p$-discrepancy.

\begin{theorem}\label{t:simple}
Let $p\in(1,\infty)$. For every constant $C_1>0$, there exists $C_2>0$ such that whenever $\mathcal P_N \subset [0,1]^d$ satisfies %\begin{equation}\label{e:simple1} 
$\|D_N \|_p  \le C_1 (\log N) ^{ (d-1)/2}$, %\end{equation}
it implies that 
\begin{equation}\label{e:simple2} \|D_N \|_1  \ge C_2 (\log N) ^{ (d-1)/2}. \end{equation}
\end{theorem}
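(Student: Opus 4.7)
The plan is to split into two regimes depending on whether $p>2$ or $p\le 2$: in the former, Roth's $L^2$ lower bound drives the argument; in the latter, one must fall back on the Orlicz endpoint estimate \eqref{e:DPend}.

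For $p>2$, the proof will be a one-line application of the log-convexity of $L^q$-norms on the probability space $[0,1]^d$:
\[
\lVert D_N \rVert_2 \;\le\; \lVert D_N \rVert_1^{\theta}\,\lVert D_N \rVert_p^{1-\theta}, \qquad \theta = \frac{p-2}{2(p-1)} \in \bigl(0,\tfrac12\bigr).
\]
Combining Roth's bound $\lVert D_N \rVert_2 \gtrsim (\log N)^{(d-1)/2}$ (Theorem \ref{t.DP} at $p=2$) with the hypothesis $\lVert D_N \rVert_p \le C_1 (\log N)^{(d-1)/2}$ and rearranging immediately yields $\lVert D_N \rVert_1 \gtrsim (\log N)^{(d-1)/2}$, with $C_2$ depending explicitly on the Roth constant, on $C_1$, on $p$, and on $d$.

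For $1<p\le 2$ the interpolation above degenerates since $\theta \to 0^+$ as $p\to 2^+$, so a different idea is needed. The case $d=2$ is immediate from Hal\'{a}sz's estimate \eqref{e:DL1}, so I would assume $d\ge 3$. The plan is to combine \eqref{e:DPend} with the rearrangement representation $\lVert f \rVert_{L(\log L)^\alpha} \simeq \int_0^1 f^*(s)\log^\alpha(e/s)\,ds$. Splitting this integral at a threshold $s=\epsilon\in(0,1)$, the tail $s\ge\epsilon$ is controlled by $\log^\alpha(e/\epsilon)\,\lVert f \rVert_1$ (using monotonicity of $f^*$ and $\int_0^1 f^* = \lVert f \rVert_1$), while the head $s\le \epsilon$ is controlled by H\"older's inequality, contributing $\lVert f \rVert_p\cdot \epsilon^{1-1/p}\log^\alpha(1/\epsilon)$ up to a constant depending on $p,\alpha$. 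This yields the interpolation
\[
\lVert f \rVert_{L(\log L)^\alpha} \;\lesssim\; \log^\alpha(1/\epsilon)\bigl[\,\lVert f \rVert_1 + \lVert f \rVert_p\,\epsilon^{1-1/p}\,\bigr], \qquad \epsilon\in(0,1).
\]
Specializing to $\alpha=(d-2)/2$ and $f=D_N$, using \eqref{e:DPend} on the left and the hypothesis on the right, and choosing $\epsilon$ small enough (depending only on $C_1,p,d$) that the $\lVert D_N \rVert_p$-contribution is at most half the lower bound in \eqref{e:DPend}, forces $\lVert D_N \rVert_1 \gtrsim (\log N)^{(d-1)/2}$.

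The main obstacle is the case $1<p\le 2$ in dimensions $d\ge 3$: it cannot be handled by elementary $L^q$-interpolation and genuinely relies on the nontrivial Orlicz endpoint \eqref{e:DPend} from \cite{MR2419612}. The resulting constant $C_2$ inevitably degrades as $C_1$ grows, since $\epsilon$ has to be taken correspondingly small; nevertheless, for each fixed $C_1$ the value of $C_2$ is strictly positive, which is all that is required.
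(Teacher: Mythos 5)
Your proof is correct, but it is more elaborate than necessary, and the case split at $p=2$ hides the fact that a single elementary argument covers the whole range $1<p<\infty$. The point you miss is that the Roth--Schmidt lower bound \eqref{e:DP} is not just an $L^2$ estimate: it holds for \emph{every} exponent $q\in(1,\infty)$. The paper exploits this by applying \eqref{e:DP} at the intermediate exponent $q = 2p/(p+1)$, which always lies in $(1,\min(p,2))$ for $p>1$, and then uses the midpoint H\"older interpolation
$\lVert D_N\rVert_{2p/(p+1)} \le \lVert D_N\rVert_1^{1/2}\,\lVert D_N\rVert_p^{1/2}$
(i.e.\ always with $\theta=1/2$). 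Rearranging gives $\lVert D_N\rVert_1 \ge \lVert D_N\rVert_{2p/(p+1)}^2 / \lVert D_N\rVert_p \gtrsim (\log N)^{(d-1)/2}$ in one stroke, for all $p\in(1,\infty)$, with no degeneration of the interpolation exponent and no appeal to the Orlicz endpoint.

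Your argument for $p>2$ is a variant of the same idea, just with the other endpoint of the interpolation pinned at $q=2$ rather than choosing $q$ so that $\theta=1/2$; it gives a worse (but still positive) exponent $\theta = (p-2)/(2(p-1))$ and therefore a worse constant $C_2$, though of course this is immaterial for the statement. Your argument for $1<p\le 2$ --- splitting the rearrangement integral of the $L(\log L)^{(d-2)/2}$ norm at a small threshold $\epsilon$ and invoking \eqref{e:DPend} --- is also valid, but it pulls in the hard endpoint theorem from \cite{MR2419612} where the elementary Roth--Schmidt $L^q$ bound suffices. That is a genuine inefficiency: one should reserve \eqref{e:DPend} for situations (such as Theorem~\ref{c:}) where the $L^q$ scale truly does not help. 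So: the proof is correct, but you should note that the lower bound in Theorem~\ref{t.DP} applies at any intermediate exponent, which collapses the two cases into one short H\"older computation.
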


The next theorem, also true for general dimensions, amplifies this effect. It states that  if the $L^1$-discrepancy fails Conjecture \ref{j:halasz} by a small exponent, then the $L^2$-discrepancy is not just suboptimal, but huge.

%%%%%%%%%%%%%%%%%%%%%%%%%%%%%% THEOREM THEOREM THEOREM
\begin{theorem}\label{t:dichot} 
For all dimensions $ d\ge 3$, there is an $ \epsilon = \epsilon (d)>0$  and $ c = c (d)>0$ such that for all integers $ N\ge 1$,   every $\mathcal P_N \subset [0,1]^d$ satisfies either
\begin{equation*}
\lVert D_N\rVert_{1} \ge (\log N ) ^{(d-1)/2- \epsilon } 
\quad \textup{or} \quad 
\lVert D_N\rVert_{2} \ge \operatorname {exp}(c (\log N) ^{\epsilon })\,. 
\end{equation*}
\end{theorem}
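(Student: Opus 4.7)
The plan is to deduce Theorem~\ref{t:dichot} from the endpoint Orlicz estimate \eqref{e:DPend} together with a quantitative interpolation inequality of the form
\begin{equation*}
  \|f\|_{L(\log L)^{\alpha}} \lesssim_\alpha \|f\|_1 \,\bigl(\log(e + \|f\|_2/\|f\|_1)\bigr)^{\alpha},
\end{equation*}
valid for any $\alpha \geq 0$ and any $f$ on $[0,1]^d$. To prove this, I would split the integral $\int F(|f|)\,dx$, where $F(t) = t(\log(e+t))^\alpha$, at a threshold $T$. On $\{|f|\leq T\}$ the pointwise estimate yields $\int_{|f|\leq T}F(|f|)\,dx \leq (\log(e+T))^{\alpha}\|f\|_1$; on $\{|f|>T\}$, a layer-cake decomposition combined with the Chebyshev bound $|\{|f|>\lambda\}| \leq \|f\|_2^2/\lambda^2$ and the convergent integral $\int_T^\infty (\log\lambda)^\alpha/\lambda^2\,d\lambda \simeq (\log T)^\alpha/T$ contributes $\lesssim \|f\|_2^2(\log T)^\alpha/T$. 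Balancing via $T \simeq \|f\|_2^2/\|f\|_1$ bounds $\int F(|f|)\,dx$ by $\|f\|_1\bigl(\log(e+\|f\|_2/\|f\|_1)\bigr)^\alpha$, which in turn controls the Luxemburg norm through the superhomogeneity $F(ct)\geq cF(t)$ for $c\geq 1$.

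Applying the inequality with $\alpha=(d-2)/2$ to $D_N$ and combining with \eqref{e:DPend} yields the master bound
\begin{equation*}
  (\log N)^{(d-1)/2} \;\lesssim_d\; \|D_N\|_1 \bigl(\log(e+\|D_N\|_2/\|D_N\|_1)\bigr)^{(d-2)/2}.
\end{equation*}
Hence if $\|D_N\|_1 < (\log N)^{(d-1)/2-\epsilon}$ then $\log(\|D_N\|_2/\|D_N\|_1) \gtrsim_d (\log N)^{2\epsilon/(d-2)}$, forcing $\|D_N\|_2 \geq \exp\bigl(c(\log N)^{2\epsilon/(d-2)}\bigr)$. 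For $d\in\{3,4\}$ the exponent $2\epsilon/(d-2) \geq \epsilon$, so the conclusion is at least as strong as the one claimed and the dichotomy is proved. For $d \geq 5$, the exponent $2\epsilon/(d-2)$ is strictly smaller than $\epsilon$, so the bound above only gives a weaker form of the dichotomy; to obtain the common $\epsilon$ demanded in the theorem, one must either iterate the argument using the $L^p$ Schmidt-type lower bounds of Theorem~\ref{t.DP} with controlled $(p-1)$-dependence of the constants, or exploit finer structural features of the discrepancy function (e.g.\ its Haar / Riesz-product decomposition).

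The main technical obstacle is producing the interpolation inequality with the sharp exponent $\alpha$, rather than the $\alpha+1$ that results from applying the Zygmund rearrangement formula $\|f\|_{L(\log L)^\alpha} \simeq \int_0^1 f^*(s)(1+\log(1/s))^\alpha\,ds$ together with only $f^*(s) \leq \min\bigl(\|f\|_1/s,\, \|f\|_2/s^{1/2}\bigr)$. The sharpness comes from an asymmetric treatment of the high- and low-excursion regimes in the level-set decomposition. A secondary obstacle, specific to $d \geq 5$, is the reconciliation of the two exponents of $\epsilon$ in the dichotomy, which seems to require input beyond the elementary interpolation above.
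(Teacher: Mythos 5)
Your interpolation inequality
\begin{equation*}
\|f\|_{L(\log L)^{\alpha}} \lesssim_\alpha \|f\|_1\,\bigl(\log(e + \|f\|_2/\|f\|_1)\bigr)^{\alpha}
\end{equation*}
is correct, and combined with the endpoint bound \eqref{e:DPend} it does give the master inequality you state. For $d=3$ and $d=4$ this yields the claimed dichotomy with a common $\epsilon$, and that part of your argument is sound. However, as you yourself point out, for $d\ge 5$ the master inequality only gives $\|D_N\|_2\ge\exp(c(\log N)^{2\epsilon/(d-2)})$ under the hypothesis $\|D_N\|_1<(\log N)^{(d-1)/2-\epsilon}$, and since $2/(d-2)<1$ the two exponents of $\log N$ cannot be reconciled by any choice of $\epsilon>0$: enlarging $\epsilon$ helps the first alternative but hurts the second, and no rescaling of the parameter fixes this. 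The suggested remedies (iterating $L^p$ Schmidt bounds or exploiting ``finer structure'') are not carried out, so the proof has a genuine gap for $d\ge 5$, which is precisely the regime the theorem is really after.

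The paper's proof avoids interpolation of norms altogether and instead tests $D_N$ against a rescaled Roth--Schmidt function. Setting $q=n^{\epsilon}$ with $\epsilon\simeq 1/d$ and
\begin{equation*}
Y=\frac{1}{n^{(d-1)/2}q}\sum_{\vec r:\,|\vec r|=n}f_{\vec r},
\end{equation*}
one has $\langle D_N,Y\rangle\gtrsim n^{(d-1)/2}/q=n^{(d-1)/2-\epsilon}$, but $Y$ is unbounded, so $\langle D_N,Y\rangle$ does not directly bound $\|D_N\|_1$. The key input is the sub-Gaussian tail estimate of Theorem~\ref{t:exp} (a deep consequence of the small ball inequality), which gives $|\{|Y|>1\}|\lesssim\exp(-cq^2)$. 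A trilinear H\"older inequality bounds $\int_{\{|Y|>1\}}|D_N\,Y|\lesssim\exp(-c'q^2)\,q^{-1}\|D_N\|_2$, so if $\|D_N\|_2$ is below $\exp(c''q^2)=\exp(c''n^{2\epsilon})$ this error is at most half of $\langle D_N,Y\rangle$, forcing $\|D_N\|_1\gtrsim n^{(d-1)/2-\epsilon}$. After relabeling $2\epsilon\mapsto\epsilon$ this produces the matching exponent in both alternatives in every dimension $d\ge 3$. So the missing ingredient in your attempt is exactly the exponential-squared distributional estimate for $Z$, which is what makes the exponent uniform across all dimensions rather than degrading for $d\ge 5$.
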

%%%%%%%%%%%%%%%%%%%%%%%%%%%%%% THEOREM THEOREM THEOREM

Thus a putative example of a distribution $ \mathcal P_N$ with $ D_N$ very small in the $ L ^{1}$ norm  
must be very far from extremal in the $ L ^2 $-norm.  
The proof will show that one can take $ \epsilon (d)$ as large as a fixed multiple of $ 1/d$.  
Specializing to the case of dimension $ d=3$, we can replace the $ L ^{2}$ norm above by a much smaller norm.  
  
%%%%%%%%%%%%%%%%%%%%%%%%%%%%%% THEOREM THEOREM THEOREM
\begin{theorem}\label{t:product} In dimension $ d=3$, there holds  
\begin{equation*}
\lVert D_N\rVert_{1}  \cdot  \lVert D_N\rVert_{L (\log L)} \gtrsim (\log N) ^2  \,. 
\end{equation*}
\end{theorem}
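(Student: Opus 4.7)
The plan is to deduce Theorem~\ref{t:product} from the Orlicz endpoint inequality~\eqref{e:DPend} in dimension $d=3$, combined with a Cauchy--Schwarz interpolation among the three Zygmund norms $L^{1}$, $L(\log L)^{1/2}$, and $L\log L$.

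First, I would recall the standard rearrangement-invariant representation of the Zygmund norms on $([0,1]^{d},dx)$,
\begin{equation*}
\|f\|_{L(\log L)^{\alpha}} \;\simeq\; \int_{0}^{1} f^{*}(t)\,\bigl(1+\log(1/t)\bigr)^{\alpha}\,dt, \qquad \alpha\ge 0,
\end{equation*}
where $f^{*}$ denotes the non-increasing rearrangement of $|f|$. For $\alpha=0$ this is simply $\|f\|_{1}$, while the values $\alpha=\tfrac12$ and $\alpha=1$ realize the two Zygmund norms appearing in the statement as weighted $L^{1}$-averages of $f^{*}$ against the weights $(1+\log(1/t))^{1/2}$ and $1+\log(1/t)$, respectively.

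Second, I would apply Cauchy--Schwarz on $(0,1)$ to the factorization
\begin{equation*}
f^{*}(t)\bigl(1+\log(1/t)\bigr)^{1/2}=\bigl[f^{*}(t)\bigr]^{1/2}\cdot\bigl[f^{*}(t)\bigl(1+\log(1/t)\bigr)\bigr]^{1/2};
\end{equation*}
this produces the key interpolation inequality, valid for every measurable $f$ on $[0,1]^{3}$,
\begin{equation*}
\|f\|_{L(\log L)^{1/2}}^{\,2}\;\lesssim\;\|f\|_{1}\,\|f\|_{L\log L}.
\end{equation*}

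Finally, I would specialize to $f=D_{N}$ on $[0,1]^{3}$. The Orlicz endpoint in Theorem~\ref{t.DP} with $d=3$ reads $\|D_{N}\|_{L(\log L)^{1/2}}\gtrsim \log N$, so squaring and substituting into the interpolation inequality yields
\begin{equation*}
(\log N)^{2}\;\lesssim\;\|D_{N}\|_{L(\log L)^{1/2}}^{\,2}\;\lesssim\;\|D_{N}\|_{1}\cdot\|D_{N}\|_{L\log L},
\end{equation*}
which is exactly Theorem~\ref{t:product}. I do not foresee any real obstacle here: all the substantive work is already absorbed into Theorem~\ref{t.DP}, and the only thing to verify directly is the classical rearrangement representation of the Zygmund scale (alternatively, one can run the Cauchy--Schwarz step on the Luxemburg definitions of the norms after rescaling $f$ by $\|f\|_{L\log L}$). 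It is worth noting that the argument is entirely formal and uses $d=3$ only through the endpoint estimate; the analogous statement in higher dimensions would follow verbatim from Conjecture~\ref{j:halasz}, but an unconditional proof appears to require a genuinely new input on the $L(\log L)$ side.
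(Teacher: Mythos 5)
Your proof is correct, and it takes a genuinely different route from the paper. The paper proves Theorem~\ref{t:product} by constructing a bounded Riesz-product-type test function
$Y = n^{-1/2}\sum_{j=1}^{n/2}\sin\bigl(cn^{-1/2}\sum_{\vec r:\,r_1=j}f_{\vec r}\bigr)$,
establishing $\langle D_N,Y\rangle\gtrsim n$ and a sub-Gaussian tail for $Y$ (Lemma~\ref{l.Y}, which itself leans on \cite{MR2419612}), and then splitting the pairing $\langle D_N,Y\rangle$ on the level set $\{|Y|>\alpha\}$, invoking the duality of $L\log L$ with $\exp(L)$, and optimizing over $\alpha$. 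You instead treat the Orlicz endpoint \eqref{e:DPend} (also from \cite{MR2419612}) as a black box and observe that the map $\alpha\mapsto\log\|f\|_{L(\log L)^{\alpha}}$ is convex, as an immediate Cauchy--Schwarz consequence of the rearrangement representation $\|f\|_{L(\log L)^{\alpha}}\simeq\int_0^1 f^*(t)(1+\log(1/t))^{\alpha}\,dt$; specializing to $\alpha=0,\tfrac12,1$ gives $\|f\|_{L(\log L)^{1/2}}^2\lesssim\|f\|_1\|f\|_{L\log L}$, and then \eqref{e:DPend} with $d=3$ finishes. This is shorter and more transparent than the paper's argument, and it cleanly factors the hard analysis into the already-cited endpoint result. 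Both proofs ultimately consume \cite{MR2419612}, so neither is more self-contained than the other; what the paper's construction buys is an explicit test function $Y$ with sub-Gaussian tails, which is a reusable object, whereas your argument is purely formal interpolation.

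One small correction to your closing remark: you do not need Conjecture~\ref{j:halasz} to run the same argument in higher dimensions. Applying Cauchy--Schwarz to the factorization $f^*(t)(1+\log(1/t))^{(d-2)/2}=[f^*(t)]^{1/2}\cdot[f^*(t)(1+\log(1/t))^{d-2}]^{1/2}$ together with the known endpoint \eqref{e:DPend} gives, unconditionally for all $d\ge3$,
\begin{equation*}
\lVert D_N\rVert_{1}\cdot\lVert D_N\rVert_{L(\log L)^{d-2}}\gtrsim(\log N)^{d-1},
\end{equation*}
which is the natural higher-dimensional analogue. The point is that the Orlicz space on the right drifts away from $L\log L$ as $d$ grows; keeping $L\log L$ fixed would indeed require new input, but that is a different statement from the one Cauchy--Schwarz produces.
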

%%%%%%%%%%%%%%%%%%%%%%%%%%%%%% THEOREM THEOREM THEOREM 

Unfortunately, this estimate is consistent with a putative distribution $\mathcal P_N $, for which  $ \lVert D_N\rVert_{1} \lesssim  (\log N ) ^{1/2} $.  
The  last theorem of this series addresses possible examples, where $ D_N$ is less that $ (\log N )^{1/2} $ in the  $ L ^{1} $ norm.  

%%%%%%%%%%%%%%%%%%%%%%%%%%%%%% COROLLARY COROLLARY COROLLARY
\begin{theorem}\label{c:} For all dimensions $ d\ge 3$ and all  $ C_1> 0$, there is a $ C_2>0$ so that 
if $ \lVert D_N\rVert_{1} \le C_1 \sqrt {\log N}$, then $ \lVert D_N\rVert_{2} \gtrsim N ^{C_2}$.   
\end{theorem}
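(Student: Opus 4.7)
My plan is to play the endpoint lower bound \eqref{e:DPend} against an interpolation inequality that controls the Orlicz norm $\lVert\,\cdot\,\rVert_{L(\log L)^{(d-2)/2}}$ from above by $\lVert\,\cdot\,\rVert_1$ and $\lVert\,\cdot\,\rVert_2$. Since \eqref{e:DPend} gives $\lVert D_N\rVert_{L(\log L)^{(d-2)/2}} \gtrsim (\log N)^{(d-1)/2}$, any matching upper bound of the form $\lVert f\rVert_{L(\log L)^\alpha} \lesssim \lVert f\rVert_1 \log^\alpha(e + \lVert f\rVert_2/\lVert f\rVert_1)$ will, once $\lVert D_N\rVert_1 \leq C_1\sqrt{\log N}$ is substituted, force $\lVert D_N\rVert_2/\lVert D_N\rVert_1$ to be polynomially large in $N$.

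The technical step is the following interpolation lemma: for every $\alpha \geq 0$ and every $f \in L^2([0,1]^d)$,
\begin{equation*}
\lVert f\rVert_{L(\log L)^\alpha} \leq C_\alpha \lVert f\rVert_1 \bigl(1 + \log^+(\lVert f\rVert_2/\lVert f\rVert_1)\bigr)^\alpha.
\end{equation*}
I would prove this by testing the Luxemburg condition $\int \Phi(|f|/\lambda) \leq 1$, for $\Phi(t) = t(\log(e+t))^\alpha$, at the candidate value $\lambda = C\lVert f\rVert_1 \bigl(1 + \log^+(\lVert f\rVert_2/\lVert f\rVert_1)\bigr)^\alpha$. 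A layer-cake truncation $|f| = |f|\mathbf{1}_{|f|\leq T} + |f|\mathbf{1}_{|f|>T}$ separates the integral: the small-value piece is bounded by $\lambda^{-1}(\log(e+T/\lambda))^\alpha \lVert f\rVert_1$, while on the tail, the elementary inequality $(\log(e+s))^\alpha \lesssim_\alpha s^{1/2}$ for $s \geq 1$ combined with Chebyshev yields a bound $\lesssim_\alpha \lambda^{-3/2} T^{-1/2} \lVert f\rVert_2^2$. Balancing the two contributions via $T \sim \lVert f\rVert_2^4/(\lVert f\rVert_1^2 \lambda)$ and substituting the chosen $\lambda$ produces the required estimate $\leq 1$ once $C$ is large enough.

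With the lemma in hand, the conclusion follows by direct substitution:
\begin{equation*}
(\log N)^{(d-1)/2} \lesssim \lVert D_N\rVert_{L(\log L)^{(d-2)/2}} \lesssim \lVert D_N\rVert_1\bigl(1+\log^+(\lVert D_N\rVert_2/\lVert D_N\rVert_1)\bigr)^{(d-2)/2},
\end{equation*}
which, combined with $\lVert D_N\rVert_1 \leq C_1\sqrt{\log N}$, yields $\log N \lesssim C_1^{2/(d-2)}\bigl(1+\log^+(\lVert D_N\rVert_2/\lVert D_N\rVert_1)\bigr)$. Hence $\lVert D_N\rVert_2 \geq \lVert D_N\rVert_1 \cdot N^{C_2} \gtrsim N^{C_2}$ with $C_2 = c(d)\,C_1^{-2/(d-2)} > 0$ (for $N$ large; small $N$ are absorbed into the implicit constant, since $\lVert D_N\rVert_1 \gtrsim \sqrt{\log N}$ by Theorem \ref{t:DL1}). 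The only nontrivial step is the interpolation lemma itself, which is a routine layer-cake calculation provided the parameters are balanced with care; the same method, with a different balance of exponents, also recovers Theorem \ref{t:dichot} with $\epsilon \sim 1/d$.
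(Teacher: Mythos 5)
Your proof is correct, and it takes a genuinely different route from the paper. The paper's proof also starts from the endpoint estimate \eqref{e:DPend}, but then introduces the tilted probability measure $d\mathbb{P}_N = \frac{|D_N|}{\|D_N\|_1}\,dx$, shows $\int (\log_+|D_N|)^{(d-2)/2}\,d\mathbb{P}_N \gtrsim n^{(d-2)/2}$, and combines the pointwise bound $|D_N|\le N$ (hence $\log|D_N|\le n$) with a Paley--Zygmund inequality to extract a set of $\mathbb P_N$-measure $\gtrsim 1$ on which $\log_+|D_N|>cn$; squaring the integrand over that set produces the $N^{C'}$ lower bound for $\|D_N\|_2^2$. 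You instead prove and use the extrapolation inequality
\begin{equation*}
\lVert f\rVert_{L(\log L)^\alpha} \lesssim_\alpha \lVert f\rVert_1 \bigl(1 + \log^+(\lVert f\rVert_2/\lVert f\rVert_1)\bigr)^\alpha,
\end{equation*}
which is a clean, self-contained layer-cake computation; I checked the balancing with $\lambda = C\lVert f\rVert_1 L^\alpha$ ($L := 1+\log^+(\lVert f\rVert_2/\lVert f\rVert_1)$) and $T = \lVert f\rVert_2^4/(\lVert f\rVert_1^2 \lambda)$, and it works, modulo the small bookkeeping (which you flag) of treating the region $T<|f|\le\lambda$ separately since $\log^\alpha(e+s)\lesssim s^{1/2}$ only holds for $s\ge 1$. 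Your approach has the advantage of not needing the structural bound $|D_N|\le N$ at all -- the conclusion follows purely from the two norms -- and isolates a reusable abstract inequality; the paper's approach avoids any Orlicz-interpolation lemma at the cost of invoking a specific feature of $D_N$. One caveat on your closing remark: running the same interpolation under the hypothesis $\|D_N\|_1 < (\log N)^{(d-1)/2-\epsilon}$ gives $\|D_N\|_2 \gtrsim \exp\bigl(c(\log N)^{2\epsilon/(d-2)}\bigr)$, so the exponent in the second clause degrades as $d$ grows and does not literally match the single $\epsilon$ appearing in both clauses of Theorem~\ref{t:dichot}; you recover a dichotomy of the same flavor, but with a different coupling of exponents than the paper obtains from Theorem~\ref{t:exp}.
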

%%%%%%%%%%%%%%%%%%%%%%%%%%%%%%  COROLLARY COROLLARY COROLLARY

Finally,   the dichotomies above are of an essentially optimal nature in light of the examples in this 
next result. 

%%%%%%%%%%%%%%%%%%%%%%%%%%%%%% THEOREM THEOREM THEOREM
\begin{theorem}\label{t.example} 
For all dimensions $ d\ge 2$, there is a distribution such that 
\begin{equation*}
\lVert D_N\rVert_{1} \lesssim (\log N ) ^{(d-1)/2} 
\quad \textup{and} \quad 
\lVert D_N\rVert_{2} \gtrsim N^{1/4}.
\end{equation*}
\end{theorem}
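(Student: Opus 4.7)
The plan is to take a classical low-discrepancy configuration and perform a highly localized surgery near the corner $(1,\dots,1)$: the surgery inflates the $L^2$ norm of the discrepancy up to $N^{1/4}$, while barely perturbing the $L^1$ norm. Set $L := (\log N)^{(d-1)/2}$ and fix the scale $\ell := (L/N)^{1/(2d)}$, so that $N\ell^{2d}=L$; write $C := [1-\ell,1]^d$.

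I would start from any classical $N$-point low-discrepancy set $\mathcal P\subset[0,1]^d$---for instance a digit-shifted Halton or Chen--Skriganov construction---for which both $\lVert D^{\mathcal P}_N\rVert_1 \lesssim L$ and $\lVert D^{\mathcal P}_N\rVert_\infty \lesssim (\log N)^{d-1}$ hold. Let $m:=\#(\mathcal P\cap C)$, and define a new multiset $\mathcal P'_N$ by removing the $m$ points of $\mathcal P$ lying in $C$ and replacing them by $m$ copies of $(1,\dots,1)$; the cardinality is unchanged. Because $(1,\dots,1)$ never belongs to any $[0,x)$ with $x\in[0,1]^d$, and because the deleted points of $C$ only affect $\#(\mathcal P'_N\cap[0,x))$ for $x\in(1-\ell,1)^d$, the new discrepancy $D'_N$ agrees with $D^{\mathcal P}_N$ outside this open corner. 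For $x = (1-\ell+s_1,\dots,1-\ell+s_d)$ with $s\in(0,\ell)^d$, the number of deleted $\mathcal P$-points lying in $[0,x)$ equals the count of $\mathcal P$ inside the sub-box $\prod_i[1-\ell,1-\ell+s_i)$, which by the $L^\infty$-bound is $N\prod_i s_i + O((\log N)^{d-1})$. Hence
\[
D'_N(x) \;=\; D^{\mathcal P}_N(x) \;-\; N\prod_{i=1}^{d} s_i \;+\; O\bigl((\log N)^{d-1}\bigr) \qquad\text{on } (1-\ell,1)^d.
\]

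It then remains to balance the two estimates. For the $L^1$ norm, the exterior contribution is $\le \lVert D^{\mathcal P}_N\rVert_1\lesssim L$, and the corner contribution is $N\int_{(0,\ell)^d}\prod_i s_i\,ds + O((\log N)^{d-1}\ell^d) = 2^{-d}N\ell^{2d} + o(L) \lesssim L$, using $N\ell^{2d}=L$. For the $L^2$ lower bound, restricting to $C$ produces a dominant term $\int_{(0,\ell)^d}\bigl(N\prod_i s_i\bigr)^2 ds = 3^{-d}N^2\ell^{3d} = 3^{-d}N^{1/2}L^{3/2}$, which beats the cross terms of order $N\ell^{2d}(\log N)^{d-1}=L(\log N)^{d-1}$, yielding $\lVert D'_N\rVert_2 \gtrsim N^{1/4}L^{3/4}\gtrsim N^{1/4}$. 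The only delicate step is the parameter balance: the exponent $1/(2d)$ in the choice of $\ell$ is precisely what equates the $L^1$-cost of the polynomial hump $N\prod_i s_i$ with the target $L$, and that same choice automatically forces the $L^2$ norm up to $N^{1/4}$. Everything else reduces to elementary integration, with the only external input being the two well-known bounds for the starting low-discrepancy set.
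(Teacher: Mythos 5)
Your proof is correct and follows essentially the same approach as the paper: both perform surgery at the corner $(1,\dots,1)$, replacing the points in a small cube there by copies of $(1,\dots,1)$, and balance the cube size so that the $L^1$ perturbation stays $\lesssim(\log N)^{(d-1)/2}$ while the $L^2$ norm is driven up to $N^{1/4}$. The paper works with a $p$-adic net and a cube of side $N^{-1/(2d)}$, using the exact net counting property to estimate $D_N-D'_N$, whereas you use the $L^\infty$-discrepancy bound and a side length with an extra logarithmic factor and explicitly track the polynomial hump $N\prod s_i$ — a minor variation that makes the estimates slightly more explicit but does not change the argument.
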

%%%%%%%%%%%%%%%%%%%%%%%%%%%%%% THEOREM THEOREM THEOREM

The proofs are based upon the detailed information used to obtain non-trivial improvement in the $ L ^{\infty }$ endpoint estimates 
in \cites{MR2414745,MR2409170}.  We recall the required estimates in the next section and then turn to the proofs of  Theorems \ref{t:simple}--\ref{t.example} in \S\ref{s.proof}.

\section{The Orthogonal Function Method} %\label{s.}

All progress on these universal lower bounds has been based upon  the orthogonal function method, initiated by Roth \cite{MR0066435}, with the modifications of Schmidt \cite{MR0491574}, as 
presented here.  
Denote the family of all dyadic intervals $ I\subset [0,1]$ by $ \mathcal D$.  Each dyadic interval 
$ I$ is the union of two dyadic intervals $ I _{-}$ and $I_+$, each  of exactly half the length of $ I$, representing
the left and right halves of $ I$ respectively.  Define the Haar function associated to $ I$ 
by $ h_I = - \chi _{I _{-}}+ \chi _{I _{+}}$.  Here  and throughout we will use the $ L ^{\infty }$ (rather than $L^2$)
normalization of the  Haar functions.  

In dimension $ d$, the $ d$-fold product $ \mathcal D ^{d}$ is the collection of dyadic intervals in $ [0,1] ^{d}$. 
Given $ R= R_1 \times \cdots \times R_d \in \mathcal D ^{d}$, the Haar function associated with $ R$ is the 
tensor product 
\begin{equation*}
h _{R} (x_1 ,\dotsc, x_d) = \prod _{j=1} ^{d} h _{R_j} (x_j) \,. 
\end{equation*}
These functions are pairwise orthogonal as $ R\in \mathcal D ^{d}$ varies.  

For a $d$-dimensional vector  $ r= (r_1 ,\dotsc, r_d)$ with non-negative integer coordinates %$r_j \in \mathbb N\cup \{0\}$, $j=1,...,d$, 
let $ \mathcal D _{r}$ be the set of those $ R\in \mathcal D ^{d}$ that 
for each coordinate $ 1\le j \le d$, we have $ \lvert  R _{j}\rvert = 2 ^{- r_j} $. These rectangles partition $ [0,1] ^{d}$.  
We call $ f _{r}$ an $ r$-function (a generalized Rademacher function) if for some choice of signs $ \{\varepsilon _{R} \;:\; R\in \mathcal D _{r}\}$, we have 
\begin{equation*}
f _{r} (x) = 
\sum_{R \in \mathcal D _{r}} \varepsilon _R h _{R} (x)\,. 
\end{equation*}

The following is the crucial lemma of the method, see  \cite{MR0066435,MR0491574,MR2817765}.   
Given an  integer $ N$, we set $ n = \lceil 1 + \log_2 N \rceil$, where $\lceil x\rceil$ denotes the smallest integer greater than or equal to $x$. %, and consider a discrepancy function $ D_N$. 

%%%%%%%%%%%%%%%%%%%%%%%%%%%%%% LEMMA LEMMA LEMMA
\begin{lemma}\label{l.r}  
In all dimensions $ d\ge 2$ there is a constant $ c_d > 0 $ such that 
for each $r$ with  $\lvert  r\rvert := \sum_{j=1} ^{d} r_j=n$, there is an $ r$-function $ f _{r}$ with 
$ \langle D_N, f _{r} \rangle \ge c_d$.   
Moreover, for all $ r$-functions there holds  $ \lvert  \langle D_N , f _{r} \rangle\rvert \lesssim N 2 ^{- \lvert  r\rvert }$.  
\end{lemma}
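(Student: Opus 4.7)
The plan is to analyze the Haar coefficients $\langle D_N, h_R\rangle$ one box at a time, and then assemble $f_r$ by choosing signs. Write $D_N=A_N-NV$, where $A_N(x)=\sharp(\mathcal P_N\cap [0,x))$ is the counting part and $V(x)=\prod_{j=1}^d x_j$ is the volume. Because $V$ is a tensor product, a one-dimensional integration by parts (or direct computation) gives
\[
\langle V, h_R\rangle \;=\; \prod_{j=1}^d \int_{R_j} x_j\, h_{R_j}(x_j)\,dx_j \;=\; \prod_{j=1}^d \tfrac{|R_j|^2}{4} \;=\; 4^{-d}|R|^2 \;=\; 4^{-d}4^{-|r|}.
\]
For the counting part, the key observation is that $\chi_{[0,x)}(p)=\prod_j \chi_{x_j>p_j}$, so $\langle\chi_{[0,x)}(p),h_R\rangle$ factors into a product of one-dimensional inner products. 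If any coordinate $p_j\notin R_j$, that one-dimensional integrand is constant on $R_j$ and the factor vanishes by the mean-zero property of $h_{R_j}$. Consequently $\langle A_N,h_R\rangle=0$ whenever $R\cap\mathcal P_N=\emptyset$, and more generally each nonzero one-dimensional factor has magnitude at most $|R_j|/2=2^{-r_j-1}$, so
\[
|\langle A_N,h_R\rangle| \;\le\; \sharp(\mathcal P_N\cap R)\cdot 2^{-d}\cdot 2^{-|r|}.
\]

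For the lower bound with $|r|=n$, I would exploit the fact that $\mathcal D_r$ has $2^n$ rectangles while the point set has only $N\le 2^{n-1}$ points, so a pigeonhole count forces at least $2^{n-1}\ge N$ of them to be empty. On every empty $R$ the calculation above yields $\langle D_N,h_R\rangle = -N\cdot 4^{-d}\cdot 4^{-n}$, a fixed negative quantity. Setting $\varepsilon_R:=\operatorname{sgn}\langle D_N,h_R\rangle$ (arbitrary when zero), the corresponding $r$-function satisfies
\[
\langle D_N, f_r\rangle \;=\; \sum_{R\in\mathcal D_r} |\langle D_N,h_R\rangle| \;\ge\; N\cdot N\cdot 4^{-d}\cdot 4^{-n} \;\ge\; \tfrac{1}{16}\cdot 4^{-d},
\]
using $4^{-n}\ge 1/(16N^2)$ which follows from $n\le \log_2 N+2$. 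This yields the constant $c_d$.

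For the upper bound, I would simply add the two pieces together in absolute value:
\[
|\langle D_N,f_r\rangle| \;\le\; \sum_R |\langle A_N,h_R\rangle| + N\sum_R |\langle V,h_R\rangle| \;\le\; 2^{-d}\cdot 2^{-|r|}\sum_R \sharp(\mathcal P_N\cap R) + N\cdot 2^{|r|}\cdot 4^{-d}\cdot 4^{-|r|}.
\]
The first sum over $R\in\mathcal D_r$ equals $N$ (each point lies in one rectangle), and the second simplifies to $4^{-d}N\cdot 2^{-|r|}$, giving the desired bound $|\langle D_N,f_r\rangle|\lesssim N\cdot 2^{-|r|}$. The main conceptual obstacle is the Roth orthogonality remark that empty dyadic boxes produce a \emph{definite-sign} Haar coefficient of predictable size; this is what converts a purely combinatorial pigeonhole statement (many empty boxes) into a quantitative lower bound on an oscillatory inner product. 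The rest is careful bookkeeping of the constants and signs.
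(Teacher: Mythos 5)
Your proof is correct and follows the standard Roth orthogonality argument. The paper does not reprove Lemma~\ref{l.r} but simply cites Roth, Schmidt, and Bilyk's survey; what you have written is precisely the classical argument from those references: split $D_N = A_N - NV$, compute $\langle V, h_R\rangle = 4^{-d}|R|^2$ exactly, observe that $\langle A_N, h_R\rangle = 0$ for point-free boxes (and is bounded by $\sharp(\mathcal P_N \cap R)\cdot 2^{-d}|R|$ in general), invoke pigeonhole on the $2^n \ge 2N$ boxes in $\mathcal D_r$ to find at least $N$ empty ones, on each of which the Haar coefficient is the fixed negative number $-N\cdot 4^{-d}\cdot 4^{-n}$, and then choose signs to accumulate these. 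Your constants and the upper bound via $\sum_R \sharp(\mathcal P_N \cap R) = N$ all check out, so this is a complete and accurate reconstruction of the proof the paper is invoking by reference.
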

%%%%%%%%%%%%%%%%%%%%%%%%%%%%%% LEMMA LEMMA LEMMA

Heuristically, this lemma quantifies the fact that most of the information about  the discrepancy function is encoded by the Haar coefficients corresponding to boxes $R\in \mathcal D^d$ with volume $|R|\approx 1/N$. The proofs of most known lower bounds for the discrepancy function have been guided by this idea. We briefly outline the argument leading to  \eqref{e:DP}.

For integer vectors $ \vec r \in \mathbb N ^{d}$, let $ f _{\vec r}$ be an $\vec r$-function as in the previous lemma. 
Set 
\begin{equation*}
Z:=  \frac 1 {n ^{(d-1)/2}}\sum_{\vec r \;:\; \lvert  \vec r\rvert=n } f _{\vec r} \,. 
\end{equation*}
It is easy to see  that, due to orthogonality and the fact that  the number of vectors  $\vec r  \in \mathbb N ^{d}$ with $\lvert  \vec r\rvert=n$ is of the order $n^{d-1}$, we have $ \lVert Z\rVert_{2} \simeq 1$. Moreover, it also satisfies $ \lVert Z\rVert_{p} \lesssim 1$ for all $ 1<p < \infty $. This extension can be derived using Littlewood--Paley theory or, as originally done in \cite{MR0491574}, using combinatorial arguments if $p$ is an even integer.
This is enough to establish \eqref{e:DP}:  H\"{o}lder inequality and  Lemma \ref{l.r} yield
\begin{equation}
n^{\frac{d-1}2} \lesssim  \langle D_N, Z\rangle \lesssim  \| D_N \|_p \cdot \| Z\|_{p'} \lesssim \| D_N\|_p.
\end{equation}

The following is a deep exponential-squared  distributional estimate for $ Z$ --  indeed, it is  a key estimate behind the main theorems of \cite{MR2409170} on the $ L ^{\infty }$ norm of the discrepancy function.

%%%%%%%%%%%%%%%%%%%%%%%%%%%%%% THEOREM THEOREM THEOREM
\begin{theorem}\label{t:exp}\cite{MR2409170}*{Theorem 6.1}  There is an absolute constant $0<c<1$, such that in all dimensions $ d\ge 3$, for  $ \epsilon =c/d$ we have   
\begin{equation*}
\lvert  \{  x \;:\; \lvert  Z (x)\rvert > t \}\rvert \lesssim \operatorname {exp}(-c t ^2  ) \,, \qquad  0 < t < c n ^{\frac {1- 2 \epsilon } {4d-2}} 
\end{equation*}
\end{theorem}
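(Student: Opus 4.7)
The plan is to derive the sub-Gaussian distributional estimate from $L^p$ moment bounds of the form $\|Z\|_p \lesssim \sqrt{p}$, valid for all even integers $p$ up to some threshold $p_{\max}=p_{\max}(d,n)$. Indeed, Markov's inequality then yields $|\{|Z|>t\}| \le (C\sqrt{p}/t)^p$, and optimizing in $p\sim t^2$ gives $|\{|Z|>t\}|\lesssim \exp(-ct^2)$ for $t\lesssim \sqrt{p_{\max}}$. The goal is thus to push $p_{\max}$ up to $\simeq n^{(1-2\epsilon)/(2d-1)}$ with $\epsilon=c/d$, which matches the stated threshold $t\lesssim n^{(1-2\epsilon)/(4d-2)}$.

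To bound $\|Z\|_p^p$, I would expand the $p$-th power as a sum over $p$-tuples $(\vec r_1,\ldots,\vec r_p)$ with each $|\vec r_j|=n$, and integrate the product of the corresponding $r$-functions. Each $f_{\vec r}$ is a signed combination of $L^{\infty}$-normalized Haar functions supported on the rectangles of $\mathcal D_{\vec r}$, so the integral of such a product factors into one-dimensional Haar-product computations. In one variable, the product of Haar functions on two distinct dyadic scales telescopes to a signed characteristic function at the finer scale, while an isolated Haar factor on a unique scale integrates to zero. Consequently, only those $p$-tuples whose coordinate-by-coordinate level patterns admit a suitable pairing structure contribute a nonzero integral, and I would organize the sum according to these pairing patterns.

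The main work — and the main obstacle — is to quantify a Beck-type gain for such products when the vectors $\vec r_j$ are in general position. Following the strategy of \cites{MR2414745,MR2409170}, one rewrites products of $r$-functions on non-coincident coordinates as $r$-functions themselves (at a larger scale) with a controlled loss, iterates this ``product rule'', and then applies a hyperbolic Littlewood--Paley / Khintchine argument to extract a $\sqrt{p}$ factor from each free direction while paying a polynomial cost in $n$ for each forced coincidence. Summing over the combinatorial patterns of coincidences among $d$-dimensional vectors with $|\vec r|=n$, one arrives at $\|Z\|_p^p \lesssim (C\sqrt{p})^p$ exactly when $p\lesssim n^{(1-2\epsilon)/(2d-1)}$. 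The delicate step is keeping the Beck gain uniform across all coincidence patterns; it is precisely the balance between the number of such patterns (times their polynomial cost in $n$) and the $(\sqrt{p})^p$ Gaussian-type savings from the free directions that dictates the exponent $1/(4d-2)$ in the final range of $t$, and this is where the loss of the factor $1-2\epsilon$ with $\epsilon=c/d$ enters.
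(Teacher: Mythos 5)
The paper does not prove this statement: Theorem~\ref{t:exp} is imported directly from \cite{MR2409170}*{Theorem 6.1} and used as a black box, so there is no internal proof against which to compare. Your sketch is a reasonable high-level description of how that reference obtains such a bound: restricted moment estimates $\|Z\|_p\lesssim\sqrt p$ plus Chebyshev do give the sub-Gaussian tail for $t\lesssim\sqrt{p_{\max}}$, and the threshold on $p$ is governed by the range over which the ``Beck gain'' for coincident $r$-function products beats the combinatorial proliferation, which correctly accounts for the exponent $(1-2\epsilon)/(4d-2)$ in the stated range of $t$.

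Two cautions are in order. First, the machinery in \cite{MR2409170} is not a raw expansion of $\|Z\|_p^p$ over $p$-tuples of vectors $\vec r$; it is organized around a conditional-expectation/Chang--Wilson--Wolff type argument applied to the layers $Z_j=\sum_{\vec r\,:\,r_1=j}f_{\vec r}$, with the Beck gain used to tame the off-diagonal contributions to a conditional square function. Your proposal mentions both the direct $p$-tuple expansion and a ``hyperbolic Littlewood--Paley/Khintchine'' device without committing to one; as written, the direct expansion route would be considerably harder to close and is not the one the cited argument takes. Second, and more importantly, the step you yourself flag as ``the main work'' --- uniformity of the Beck gain across coincidence patterns and the resulting balance that produces the $n^{(1-2\epsilon)/(4d-2)}$ threshold with $\epsilon=c/d$ --- is the entire content of the theorem, and your sketch defers it in full to the citations. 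That is honest, but it means your proposal is a roadmap to the cited proof rather than a proof; since the paper under review treats this statement as an external input, that is a defensible but essentially vacuous position, and the passage does not supply any of the missing quantitative estimates.
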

%%%%%%%%%%%%%%%%%%%%%%%%%%%%%% THEOREM THEOREM THEOREM

%%%%%%%%%%%%%%%%%%%%%%%%%%%%%% SECTION  SECTION SECTION
%%%%%%%%%%%%%%%%%%%%%%%%%%%%%% SECTION  SECTION SECTION 
\section{Proofs} \label{s.proof}

We now proceed to the proofs of the main theorems. %We start with the easiest form of the dichotomy valid in all dimensions

\begin{proof}[Proof of Theorem~\ref{t:simple}]
Assume that for a given $1<p<\infty$ we have $\| D_N \|_p \le C_1 \big( \log N\big)^{\frac{d-1}{2}}$. The Roth--Schmidt bound \eqref{e:DP}  states that $\| D_N \|_{2p/(p+1)} \ge c_{2p/(p+1)}  \big( \log N\big)^{\frac{d-1}{2}}$. Interpolating between $1$ and $p$ using H\"older's inequality we find that  $\| D_N \|_{2p/(p+1)} \le \| D_N \|_1^{1/2} \, \| D_N \|_p^{1/2}$. Therefore
\begin{equation}
\| D_N \|_1 \ge  \frac{\|D_N\|^2_{2p/(p+1)}}{\|D_N\|_p} \ge \frac{ c^2_{2p/(p+1)}  \big( \log N\big)^{{d-1}} }{ C_1 \big( \log N\big)^{\frac{d-1}{2}} } = C_2 \big( \log N\big)^{\frac{d-1}{2}},
\end{equation}
which proves \eqref{e:simple2} with $C_2 = \frac{ c^2_{2p/(p+1)}}{C_1}$.
\end{proof}

%%%%%%%%%%%%%%%%%%%%%%%%%%%%%% PROOF PROOF PROOF
\begin{proof}[Proof of Theorem~\ref{t:dichot}]  
Set  $ q = n ^{\varepsilon }$, where $ \varepsilon \simeq 1/d$, and define 
\begin{equation*}
Y :=  \frac 1 {n ^{(d-1)/2} q}\sum_{\vec r \;:\; \lvert  \vec r\rvert=n } f _{\vec r} \,. 
\end{equation*}
Then $\| Y\|_p \lesssim q^{-1}$ for $1<p<\infty$. Besides, one has 
$
\langle D_N, Y \rangle \ge c \frac {n ^{(d-1)/2}} {q} 
$. 
But unfortunately $ Y$ is not bounded, preventing an immediate conclusion about the $ L ^{1}$ norm of $ D_N$.  
 
On the other hand, from Theorem~\ref{t:exp} %, for an appropriate $ \varepsilon $, 
 we get
\begin{equation*}
\lvert \{  \lvert  Y\rvert > 1\} \rvert \lesssim \operatorname {exp}(-c q ^2 ) \,. 
\end{equation*}
Using a trilinear  H\"older's inequality, we obtain
\begin{align*}
\int _{\{  \lvert  Y\rvert > 1\}} \lvert  D_N \cdot Y\rvert \; dx &\le \lvert  \{  \lvert  Y\rvert > 1\}\rvert ^{1/4}\lVert Y\rVert_{4}  \lVert D_N\rVert_{2}   
\\
& \lesssim  \operatorname {exp}(-c' q ^2 ) \cdot  q ^{-1}   \lVert D_N\rVert_{2} \,. 
\end{align*}
This last quantity will be at most $ \tfrac 12 \langle D_N, Y \rangle $, if $ \lVert D_N\rVert_{2} \lesssim \operatorname {exp}(c'' q ^2 ) $. 
Then
\begin{align*}
\| D_N \|_1&  \ge \big| \langle D_N, Y \cdot {\bf{1}}_{\{ |Y|\le1\} } \rangle \big| \\
& \ge \langle D_N, Y \rangle - \int _{\{  \lvert  Y\rvert > 1\}} \lvert  D_N \cdot Y\rvert \; dx \ge\frac12  \langle D_N, Y \rangle  \gtrsim n^{\frac{d-1}{2} - \varepsilon}
\end{align*}
and this proves  Theorem~\ref{t:dichot}
\end{proof}
%%%%%%%%%%%%%%%%%%%%%%%%%%%%%% PROOF PROOF PROOF

%%%%%%%%%%%%%%%%%%%%%%%%%%%%%% PROOF PROOF PROOF
\begin{proof}[Proof of Theorem~\ref{t:product}]

Define  
\begin{equation*}
 Y = \frac 1 {\sqrt n} \sum_{j=1} ^{n/2} \sin \Bigl(cn ^{-1/2}  \sum_{\vec r \;:\; r_1 =j}  f_ {\vec r}\Bigr) 
\end{equation*}
where $ 0< c < 1$ is a sufficiently small constant.  

%%%%%%%%%%%%%%%%%%%%%%%%%%%%%% LEMMA LEMMA LEMMA
\begin{lemma}\label{l.Y} 
The following two estimates hold.  First, $ \langle D_N,Y \rangle \gtrsim n  $, and second, 
\begin{gather*}
\mathbb P ( \lvert  Y\rvert > \alpha    ) \lesssim \operatorname {exp}(-c \alpha  ^2 )\, \qquad \alpha >1 \,. 
\end{gather*}
\end{lemma}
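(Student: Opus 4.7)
The plan is to handle the subgaussian tail bound first via a martingale argument, then derive the inner product bound by expanding $\sin$ using the identity $f_{\vec r}^2 = 1$. For the tail bound, the sequence $g_j := \sin(cn^{-1/2} F_j)$, $j = 1, \ldots, n/2$, forms a uniformly bounded martingale difference sequence with respect to the filtration $(\mathcal{F}_j)$ generated by the dyadic partitions of the first coordinate at scale $2^{-j}$ (together with the full Borel $\sigma$-algebra on the remaining coordinates). Indeed, fix a dyadic interval $I$ of length $2^{-(j-1)}$ with halves $I_\pm$; on each half one can write $F_j(x) = h_{I_\pm}(x_1)\cdot G_{I_\pm}(x_2, x_3)$, where $G_{I_\pm}$ is a signed sum of two-dimensional $r$-functions. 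Because $h_{I_\pm}$ takes opposite signs on the two halves of $I_\pm$ and $\sin$ is odd, the integral of $\sin(cn^{-1/2}F_j)$ along the first coordinate of $I$ vanishes. Since $|g_j| \leq 1$, Azuma--Hoeffding yields $\mathbb{P}\bigl(\bigl|\sum_j g_j\bigr| > \alpha \sqrt n\bigr) \lesssim e^{-c\alpha^2}$, which after normalization by $\sqrt n$ gives the claim.

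For the inner product bound, the identity $e^{i\theta f_{\vec r}} = \cos\theta + i f_{\vec r}\sin\theta$ (from $f_{\vec r}^2 = 1$) yields the closed-form expansion
\[ \sin(cn^{-1/2} F_j) \;=\; \operatorname{Im}\!\!\!\prod_{\vec r:\, r_1 = j}\!\!\! (u + iv f_{\vec r}) \;=\; \sum_{|S|\,\text{odd}} (-1)^{(|S|-1)/2}\, v^{|S|} u^{m - |S|} \!\prod_{\vec r \in S}\! f_{\vec r}, \]
where $u = \cos(cn^{-1/2})$, $v = \sin(cn^{-1/2})$, $m = n - j + 1$, and $S$ ranges over odd-size subsets of $\{\vec r : r_1 = j,\,|\vec r| = n\}$. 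Choosing each $f_{\vec r}$ so that $\langle D_N, f_{\vec r}\rangle \geq c_d$ as provided by Lemma~\ref{l.r}, and using $v \simeq cn^{-1/2}$ together with $u^{m-1} \gtrsim 1$ for $c$ sufficiently small, the $|S| = 1$ contribution to $\langle D_N, \sin(cn^{-1/2} F_j)\rangle$ is at least of order $c(n-j+1)/\sqrt n$. Summing over $j \leq n/2$ and dividing by $\sqrt n$ produces a main term of order $n$.

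The main obstacle is controlling the $|S| \geq 3$ terms. The geometric crux is that, up to sign, $\prod_{\vec r \in S} f_{\vec r}$ is a single $\vec t$-function with $\vec t = (j, \max_S r_2, \max_S r_3)$: in each coordinate the product of Haar functions forces the supporting dyadic rectangles to be nested and then collapses to the innermost one. Hence $|\vec t| = n + (\max_S r_2 - \min_S r_2)$, and since the $r_2$ values of the $|S| = 2s+1$ distinct vectors are distinct integers in $[0, n-j]$, their spread is at least $2s$. The second part of Lemma~\ref{l.r} then gives $|\langle D_N, \prod_S f_{\vec r}\rangle| \lesssim N 2^{-|\vec t|} \lesssim 2^{-(b-a)}$ with $a = \min_S r_2,\ b = \max_S r_2$. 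Summing over $S$ by fixing $l = b - a$ (giving $\binom{l-1}{2s-1}$ choices of intermediate $r_2$ values and at most $n-j+1$ choices of base $a$) and invoking the identity $\sum_{l \geq 2s} \binom{l-1}{2s-1} 2^{-l} = 1$ yields $\sum_{|S| = 2s+1} |\langle D_N, \prod_S f_{\vec r}\rangle| \lesssim n-j$, independently of $s$. Consequently the total $|S|\geq 3$ contribution is bounded by $(n-j)\sum_{s \geq 1} v^{2s+1} \lesssim c^3(n-j)/n^{3/2}$, dominated by the main term for $c$ small enough; this establishes $\langle D_N, Y\rangle \gtrsim n$.
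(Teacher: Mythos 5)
Your proposal is correct in substance, and it essentially unpacks the argument that the paper delegates to \cite{MR2419612}: the first bound is exactly the ``\S3'' computation cited there (binomial expansion of $\sin$ via $e^{i\theta f_{\vec r}} = \cos\theta + i f_{\vec r}\sin\theta$, the nested-Haar collapse $\prod_{\vec r \in S} f_{\vec r} = \pm f_{\vec t}$ with $|\vec t| = n + (b-a)$, and the $\binom{l-1}{2s-1}2^{-l}$ counting to show the $|S|\ge 3$ tail is $O\bigl(c^3(n-j)/n^{3/2}\bigr)$), while the second bound replaces the paper's cited moment estimate $\|Y\|_p \lesssim \sqrt p$ by a direct Azuma--Hoeffding argument -- a cosmetically different but equivalent route to subgaussian decay.

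One small imprecision: as stated, the filtration ``generated by the dyadic partitions of the first coordinate at scale $2^{-j}$'' does not make $g_j$ $\mathcal F_j$-measurable, since $g_j$ involves Haar functions $h_{R_1}$ with $|R_1| = 2^{-j}$, which resolve only at scale $2^{-j-1}$. You should either take $\mathcal F_j$ at scale $2^{-j-1}$ (then $g_j$ is $\mathcal F_j$-measurable and, by the oddness argument you give, $\mathbb E[g_j \mid \mathcal F_{j-1}] = 0$ since the integral of $g_j$ over each dyadic interval of length $2^{-j}$ in the first coordinate vanishes), or shift the filtration index by one. This is an indexing fix only; the martingale-difference structure and the resulting tail bound $\mathbb P(|Y|>\alpha) \lesssim e^{-\alpha^2}$ are sound. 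The remaining details -- $u^{n-j} \gtrsim 1$ for $c$ small, the spread lower bound $b - a \geq 2s$ for $2s+1$ distinct $r_2$-values in $[0, n-j]$, and the identity $\sum_{l \ge 2s}\binom{l-1}{2s-1}2^{-l} = 1$ -- are all correct, and the resulting lower bound $\langle D_N, \sin(cn^{-1/2}F_j)\rangle \gtrsim (n-j+1)/\sqrt n \gtrsim \sqrt n$ for $j \le n/2$ sums over $j$ and divides by $\sqrt n$ to give $\langle D_N, Y\rangle \gtrsim n$, as required.
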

%%%%%%%%%%%%%%%%%%%%%%%%%%%%%% LEMMA LEMMA LEMMA

%%%%%%%%%%%%%%%%%%%%%%%%%%%%%% PROOF PROOF PROOF
\begin{proof}
Modify, in a straight forward way,   \cite{MR2419612}*{\S3} to see that for $ c $ sufficiently small, 
\begin{equation*}
\Bigl\langle D_N , \sin \Bigl(cn ^{-1/2}  \sum_{\vec r \;:\; r_1 =j}  f_ {\vec r}\Bigr)  \Bigr\rangle_{} \gtrsim \sqrt n \,, \qquad 1\le j \le n/2 \,. 
\end{equation*}
Sum this over $ j$ to prove the first claim of the Lemma.  

The second claim, the distributional estimate,   is equivalent to the bound $ \lVert Y\rVert_{p} \lesssim C \sqrt p$ for $ 2\le p < \infty $. 
This is  estimate (4.1) in \cite{MR2419612}.
\end{proof}
%%%%%%%%%%%%%%%%%%%%%%%%%%%%%% PROOF PROOF PROOF

  Set 
$ E= \{ \lvert    Y\rvert >  \alpha    \}$, where $ \alpha > 1$ is to be chosen.  
We consider the inner product 
\begin{align*}
c n  \le \langle D_N,  Y\rangle &\le  
\langle D_N, Y \mathbf 1_{E ^{c}} \rangle + \langle D_N, Y \mathbf 1_{E} \rangle 
\\
& \le \alpha \lVert D_N\rVert_{1} + \lVert D_N\rVert_{L (\log L) }  \lVert Y \mathbf 1_{E} \rVert_{\operatorname {exp}(L)}  
\\
& \le \alpha \lVert D_N\rVert_{1} + \alpha ^{-1}  \lVert D_N\rVert_{L (\log L) } 
\,,
\end{align*}
where we have used the duality of the spaces $L (\log L)$ and $\operatorname{exp}(L)$. The last estimate depends upon the calculation 
\begin{align*}
\lVert Y \mathbf 1_{E} \rVert_{ \operatorname {exp}(L )} \simeq 
\sup _{t\ge1}  t \cdot \big| \log \lvert   \{ \lvert  Y\rvert > \max \{t,\alpha\}  \}\rvert \big| ^{-1} \lesssim \sup _{t\ge1} \min \bigg\{ \frac1{t}, \frac{t}{\alpha^2} \bigg\}
\simeq  \alpha ^{-1} \,.  
\end{align*}
Choose $ \alpha ^2  \simeq \lVert D_N\rVert_{L \log L }/ \lVert D_N\rVert_{1} \ge 1$.  We then have 
\begin{equation*}
 n  \lesssim  \lVert D_N\rVert_{L \log L } ^{1/2}  \lVert D_N\rVert_{1} ^{1/2} \,, 
\end{equation*}
and this proves Theorem \ref{t:product}. 
\end{proof}
%%%%%%%%%%%%%%%%%%%%%%%%%%%%%% PROOF PROOF PROOF

%%%%%%%%%%%%%%%%%%%%%%%%%%%%%% PROOF PROOF PROOF
\begin{proof}[Proof of Theorem~\ref{c:}]
Assume that $\| D_N\|_1\le  C_1 \log N$.  We shall utilize the main result of \cite{MR2419612}, namely \eqref{e:DPend}.  
Consider  the probability measure  $ \mathbb P _N$ which is the normalized $ \lvert  D_N\rvert \; dx  $, i.e. $d \mathbb P _N (x) = \frac{ \lvert  D_N (x) \rvert }{ \lVert D_N\rVert_{1}} dx$.   
We see that 
\begin{equation*}
 \int (\log_+ |D_N|)^{\frac {d-2}2} \; d \mathbb P _N (x) \ge \frac {\lVert D_N\rVert_{L (\log L) ^{\frac {d-2}2}} } {  \lVert D_N\rVert_{1} }
 \ge C n ^{\frac {d-2} 2} \,.  
\end{equation*}
It is obvious that $|D_N(x) |\le N$, therefore $ \log |D_N| \le n$. It follows from a Paley--Zygmund-type inequality that for some $c>0$
\begin{equation}\label{e:PZ}
%\int _{ \{\log D_N > C_4n  \}} d \mathbb P _N (x) \gtrsim 1  \,. 
\mathbb P _N \{\log_+ |D_N| > cn  \} \gtrsim 1.
\end{equation}
Indeed, denoting $f=\log_+ |D_N|$ and $\alpha = (d-2)/2$, using Cauchy--Schwarz inequality we get
\begin{align*}
C n^\alpha \le \mathbb E |f|^\alpha & \le \mathbb E |f|^\alpha \mathbf{1}_{\{|f| > cn \}} + \mathbb E |f|^\alpha \mathbf{1}_{\{|f| \le cn \}}\\
& \le \big( \mathbb E |f|^{2\alpha} \big)^{1/2} \cdot \mathbb P_N^{1/2} \{|f| > cn \} + c^\alpha n^\alpha \\ &\le n^\alpha \cdot \big( \mathbb P_N^{1/2} \{|f| > cn \} + c^\alpha),
\end{align*} which yields \eqref{e:PZ} if $c$ is small enough.
From this, using the fact that $\|D_N\|_1 \gtrsim \sqrt{n}$ (Theorem \ref{t:DL1}), we deduce that 
\begin{align*}  
\lVert D_N\rVert_{2} ^2 \gtrsim \int _{ \{\log D_N > cn \}} D_N^2 (x) \; d x \gtrsim  \sqrt n \cdot 
\int _{ \{\log D_N > cn \}} |D_N (x)| \; d \mathbb P _N (x)  \gtrsim N ^{C'},
\end{align*}
which  is the conclusion of Theorem \ref{c:}. 
\end{proof}
%%%%%%%%%%%%%%%%%%%%%%%%%%%%%% PROOF PROOF PROOF

For the last proof we need an additional definition.  

%%%%%%%%%%%%%%%%%%%%%%%%%%%%%%  DEFINITION DEFINITION DEFINITION
\begin{definition}\label{d.net} 
A distribution  $\mathcal P_N$  of $ N = p ^{s}$ points is called a $p$-adic net, if any $p$-adic rectangle
\begin{equation*}
\Delta = \prod _{j=1} ^{d} [m_j p ^{-a_j}, (m_j+1) p ^{-a_j}), \quad  0\le m_j < a_j 
\end{equation*}
of volume $\frac{1}{N}$ contains exactly one point of  $\mathcal P_N$.
\end{definition}
%%%%%%%%%%%%%%%%%%%%%%%%%%%%%%  DEFINITION DEFINITION DEFINITION

For any dimension $ d\ge 2$ and  a prime $p\ge d -1$, there exist nets  with $p^s$ points for all values of $ s\ge 2$.  
One can show that if  $\mathcal P_N$  is a $p$-adic net of $N = p^s$ points, then for any rectangle $R \subset [0, 1]^d$
\begin{equation*}
\big\lvert \sharp( \mathcal P_N  \cap R )   -  |R| N  \big\rvert \leq s^{d-1}.
\end{equation*}
A similar inequality can be obtained for arbitrary $N$.

%%%%%%%%%%%%%%%%%%%%%%%%%%%%%% PROOF PROOF PROOF
\begin{proof}[Proof of Theorem~\ref{t.example}]

Let us take a net  $\mathcal P_N$  with small $L_2$ discrepancy, i.e. 
\begin{equation*}
\lVert D_N\rVert_{2} \lesssim (\log N ) ^{(d-1)/2}.
\end{equation*}
The existence of such nets is well-known \cite{MR1896098,MR2683394}.
Then clearly we also have
$
\lVert D_N\rVert_{1} \lesssim (\log N ) ^{(d-1)/2} 
$.
For $\delta > 0$ we define the cube $Q = [1 - N^{-\delta}, 1]^d$, which lies at the top right corner of $[0, 1]^d$.
As $| Q | = N ^{-\delta d}$ and the distribution $ \mathcal P_N $ is a net, it follows that  $Q$ contains about $N ^{1 -\delta d}$ points of $ \mathcal P_N $.

Let $ \mathcal P_N '$ be a new distribution obtained from  $\mathcal P_N$ by replacing the points inside $Q$ with $(1, 1 ,\dotsc, 1)$ and keeping the points outside $Q$ unchanged.  Let $ D'_N$ be the associated discrepancy function. 
Then $D_N(x) = D'_N(x)$ for $x \not\in Q$, and $D'_N$ has no contribution from the distribution of points inside $Q$. Hence for $x \in Q$
\begin{equation*}
|D_N(x) - D'_N(x) | \lesssim N ^{1 -\delta d}.
\end{equation*}
Because $\mathcal P_N $ is a net,   in a positive proportion of $Q$ we will also have 
\begin{equation*}
|D_N(x) - D'_N(x) | \gtrsim N ^{1 -\delta d}.
\end{equation*}
Therefore we have
\begin{equation*}
\lVert D_N - D'_N\rVert_{1} \simeq N ^{1 - 2 \delta d}
\quad \textup{and} \quad 
\lVert D_N - D'_N\rVert_{2}^{2} \simeq N ^{2 - 3 \delta d}.
\end{equation*}
If we take $\delta = \frac{1}{2d}$, we obtain
\begin{equation*}
\lVert D_N - D'_N\rVert_{1} \simeq 1
\quad \textup{and} \quad 
\lVert D_N - D'_N\rVert_{2} \simeq N^{1/4},
\end{equation*}
which implies that
\begin{equation*}
\lVert D_N'\rVert_{1} \lesssim (\log N ) ^{(d-1)/2} 
\quad \textup{and} \quad 
\lVert D_N'\rVert_{2} \gtrsim N^{1/4}.
\end{equation*}

\end{proof}
%%%%%%%%%%%%%%%%%%%%%%%%%%%%%% PROOF PROOF PROOF

\section{Acknowledgements}

The authors gratefully acknowledge that this research was supported in part by NSF grants DMS-1101519 (D. Bilyk), DMS-0968499 (M. Lacey), a grant from the Simons Foundation \#229596  (M.  Lacey),  and the Australian Research Council through grant ARC-DP120100399 (M. Lacey).

\begin{bibsection}
\begin{biblist}
\bib{MR903025}{book}{
  author={Beck, J.},
  author={Chen, W. W. L.},
  title={Irregularities of distribution},
  series={Cambridge Tracts in Mathematics},
  volume={89},
  publisher={Cambridge University Press},
  place={Cambridge},
  date={1987},
  pages={xiv+294},
  isbn={0-521-30792-9},
}

\bib{MR2817765}{article}{
  author={Bilyk, D.},
  title={On Roth's orthogonal function method in discrepancy theory},
  journal={Unif. Distrib. Theory},
  volume={6},
  date={2011},
  number={1},
  pages={143--184},
  issn={1336-913X},
}

\bib{MR2414745}{article}{
  author={Bilyk, D.},
  author={Lacey, M. T.},
  title={On the small ball inequality in three dimensions},
  journal={Duke Math. J.},
  volume={143},
  date={2008},
  number={1},
  pages={81--115},
  issn={0012-7094},
}

\bib{MR2409170}{article}{
  author={Bilyk, D.},
  author={Lacey, M. T.},
  author={Vagharshakyan, A.},
  title={On the small ball inequality in all dimensions},
  journal={J. Funct. Anal.},
  volume={254},
  date={2008},
  number={9},
  pages={2470--2502},
  issn={0022-1236},
}

\bib{MR1896098}{article}{
   author={Chen, W. W. L.},
   author={Skriganov, M. M.},
   title={Explicit constructions in the classical mean squares problem in
   irregularities of point distribution},
   journal={J. Reine Angew. Math.},
   volume={545},
   date={2002},
   pages={67--95},
%   issn={0075-4102},
 %  review={\MR{1896098 (2003g:11083)}},
%   doi={10.1515/crll.2002.037},
}

\bib{MR2683394}{book}{
  author={Dick, J.},
  author={Pillichshammer, F.h},
  title={Digital nets and sequences},
  note={Discrepancy theory and quasi-Monte Carlo integration},
  publisher={Cambridge University Press},
  place={Cambridge},
  date={2010},
  pages={xviii+600},
  isbn={978-0-521-19159-3},
}

\bib{MR1470456}{book}{
  author={Drmota, M.},
  author={Tichy, R.  F.},
  title={Sequences, discrepancies and applications},
  series={Lecture Notes in Mathematics},
  volume={1651},
  publisher={Springer-Verlag},
  place={Berlin},
  date={1997},
  pages={xiv+503},
  isbn={3-540-62606-9},
}

\bib{MR637361}{article}{
  author={Hal{\'a}sz, G.},
  title={On Roth's method in the theory of irregularities of point distributions},
  conference={ title={Recent progress in analytic number theory, Vol. 2}, address={Durham}, date={1979}, },
  book={ publisher={Academic Press}, place={London}, },
  date={1981},
  pages={79--94},
}

\bib{MR2419612}{article}{
  author={Lacey, M. T.},
  title={On the discrepancy function in arbitrary dimension, close to $L^1$},
  language={English, with English and Russian summaries},
  journal={Anal. Math.},
  volume={34},
  date={2008},
  number={2},
  pages={119--136},
  issn={0133-3852},
}

\bib{MR0066435}{article}{
  author={Roth, K. F.},
  title={On irregularities of distribution},
  journal={Mathematika},
  volume={1},
  date={1954},
  pages={73--79},
  issn={0025-5793},
}

\bib{MR0491574}{article}{
  author={Schmidt, W. M.},
  title={Irregularities of distribution. X},
  conference={ title={Number theory and algebra}, },
  book={ publisher={Academic Press}, place={New York}, },
  date={1977},
  pages={311--329},
}

\end{biblist}
\end{bibsection}

\end{document}